\documentclass[12pt,reqno]{amsart}
\usepackage[top=1in, bottom=1in, left=1in, right=1in]{geometry}
\usepackage{times, amsthm, amssymb, amsmath, amsfonts, amsthm, bm, graphicx, mathrsfs}
\usepackage[all]{xy}
\usepackage[usenames,dvipsnames]{color}
\usepackage[pagebackref=true,pdftex]{hyperref}
\usepackage{comment}

\newtheorem{theorem}{Theorem}[section]
\newtheorem{lemma}[theorem]{Lemma}

\theoremstyle{definition}
\newtheorem{example}[theorem]{Example}
\newtheorem{remark}[theorem]{Remark}

\newtheorem{definition}[theorem]{Definition}

%
        {\begin{list}
                {\noindent\makebox[0mm][r]{\arabic{enumi}.}}
                {\leftmargin=5.5ex \usecounter{enumi}}
        }
        {\end{list}}

%
        {\begin{list}
                {\noindent\makebox[0mm][r]{(\roman{enumi})}}
                {\leftmargin=5.5ex \usecounter{enumi}}
        }
        {\end{list}}


\def\<{\langle}
\def\>{\rangle}
\def\0{\mathbf{0}}

\def\CC{{\mathbb C}}

\def\EE{{\mathbb E}}
\def\cE{{\mathcal E}}
\def\cF{{\mathcal F}}

\def\cJ{{\mathcal J}}

\def\NN{{\mathbb N}}

\def\QQ{{\mathbb Q}}

\def\RR{{\mathbb R}}

\def\ZZ{{\mathbb Z}}

\def\bbE{{\mathbb E}}

\def\del{\partial}

\def\boldzero{\boldsymbol{0}}


\def\GL{{\rm GL}}

\def\vol{{\rm \operatorname{vol}}}
\def\conv{{\rm \operatorname{conv}}}
\def\rank{{\operatorname{rank}}}

\def\codim{{\rm codim}}

\def\minus{\smallsetminus}
\def\nothing{\varnothing}

\newcommand*{\defeq}{\mathrel{\vcenter{\baselineskip0.5ex \lineskiplimit0pt
                     \hbox{\scriptsize.}\hbox{\scriptsize.}}}%
                     =}





\numberwithin{equation}{section}
\parindent0pt


\begin{document}

\mbox{}
\title[]{A sharp bound for hypergeometric rank in dimension three} 
 
\author{Christine Berkesch}
\address{School of Mathematics \\
University of Minnesota.}
\email{cberkesc@umn.edu}

\author{Mar\'ia-Cruz Fern\'andez-Fern\'andez}
\address{Departamento de \'Algebra \\
Universidad de Sevilla.}
\email{mcferfer@algebra.us.es}

\thanks{
CB was partially supported by NSF Grant DMS 2001101. 
\\
\indent \
MCFF was partially supported by projects PID2020-117843GB-I00 (Ministerio de Ciencia e Innovaci\'on, Spain), P20-01056 and US-1262169 (Consejer\'ia de Econom\'ia, Conocimiento, Empresas y Universidad, Junta de Andaluc\'ia and FEDER)}

\subjclass[2010]{13N10, 32C38, 33C70, 14M25.}

\begin{abstract}
We provide a sharp upper bound on the quotient of the rank of an $A$-hypergeometric system with a three-dimensional torus action by the normalized volume of $A$; in this case, the upper bound is two. 
\end{abstract}
\maketitle

\mbox{}
\vspace{-15mm}
\parskip=0ex
\parindent2em
\parskip=1ex
\parindent0pt

\setcounter{section}{1}
\section*{Introduction}
\vspace{-2mm}

$A$-hypergeometric $D$-modules, also known as GKZ-systems, were introduced in~\cite{GGZ,GKZ} to  generalize classical hypergeometric equations.
These systems of linear partial differential equations in several complex variables are determined by a matrix $A = (a_{i,j})\in\ZZ^{d\times n}$ with columns $a_k\in\ZZ^d$ such that $\ZZ A=\ZZ a_1 + \cdots +\ZZ a_n=\ZZ^d$ and a parameter vector $\beta\in\CC^d$.
We assume that $A$ is \emph{pointed}, i.e., that all columns in $A$ lie in a single linear halfspace of $\RR^d$ that does not contain the origin.

\begin{definition}
\label{def:hypgeom}
Let $x_1,x_2,\dots,x_n$ be coordinates on $\CC^n$, with corresponding partial derivatives $\del_1,\del_2,\dots,\del_n$, so that the Weyl algebra $D$ on $\CC^n$ is generated by $x_1,\dots,x_n,\del_1,\dots,\del_n$. 
Let 
\[
I_A \defeq \<\del^u-\del^v\mid  u,v\in\NN^n, Au=Av\> \subseteq \CC[\del_1,\ldots, \del_n]
\] 
denote the \emph{toric ideal} of $A$, and 
let $E_i\defeq\sum_{j=1}^n a_{i,j} x_j \del_j$ be the $i$th \emph{Euler operator} of $A$. 
The \emph{$A$-hypergeometric $D$-module} with parameter $\beta\in\CC^d$ is the left $D$-module 
\[
M_A(\beta)\defeq D/D\cdot\< I_A, E_1-\beta_1,\ldots, E_d-\beta_d\>.
\]
\end{definition}

For any choice of $A$ and $\beta$, the module $M_A(\beta)$ is \emph{holonomic} \cite{GGZ,adolphson}. 
Consequently, the dimension of the space of germs of holomorphic solutions of $M_A(\beta)$ at a nonsingular point, also known as its \emph{(holonomic) rank}, is finite. 
When $\beta\in\CC^d$ is generic, the rank of $M_A(\beta)$ is equal to the \emph{normalized volume} $\vol(A)$ of the matrix $A$~\cite{GKZ, adolphson} inside the lattice $\ZZ^d$, which is the Euclidean volume of the convex hull $\Delta_A\subseteq \RR^d$ of the columns of $A$ and the origin divided by $d!$; but in general, this is only a lower bound \cite{SST,MMW}.
The set 
\[
\cE(A)\defeq\{\beta\in\CC^d \mid \; \rank (M_A (\beta))>\vol(A)\} 
\]
is called the \emph{exceptional arrangement} of $A$, which is an affine subspace arrangement of codimension at least two that is closely related to the local cohomology modules of the toric ring $\CC[\del]/I_A$ \cite{MMW}. 
A parameter $\beta \in \cE(A)$ is called a \emph{rank jumping} parameter.
Combinatorial formulas to compute the rank of $M_A(\beta)$ in terms of the \emph{ranking lattices} $\EE^\beta$ of $A$ at $\beta$ appear in \cite{okuyama,berkesch}. 
Unfortunately, the presence of alternating signs in these formulas do not yield a strong upper bound for the rank of $M_A(\beta)$. 
One exception is the case $d=2$, where it was shown in~\cite{CDD} that $\rank(M_A(\beta))\leq\vol(A)+1$ is a sharp bound. 
For arbitrary $d$, $A$, and $\beta$, previously known upper bounds for the holonomic rank of $M_A(\beta)$ are as follows: 
\[
\rank (M_A (\beta))\leq 
	\begin{cases}
	4^d \cdot \vol(A) 
	& \text{if $I_A$ is homogeneous~\cite{SST},}\\
	4^{d+1}\cdot \vol(A) 
	& \text{otherwise~\cite{BFM-parametric}.}
	\end{cases}
\]
However, it is believed that these upper bounds are much too large. 
In~\cite{BerFer}, we showed that when $\beta$ is generic among rank-jumping parameters (i.e., \emph{simple} in~\cite{berkesch}), then 
\begin{equation}
\label{eq:simple-bound}
\dfrac{\rank(M_A(\beta))}{\vol(A)} \leq (d-1),
\end{equation}
and we showed that this bound is tight by constructing a sequence of examples for which the ratio $\rank(M_A(\beta))/\vol(A)$ tends to $d-1$. 
Still, this case neglects the rank-jumping parameters $\beta$ for which the highest jumps are possible. In fact, there are families of examples for which the ratio $\rank(M_A(\beta))/\vol(A)$ grow exponentially with $d$ \cite{Fer-exp-growth}. In this note, we show that when $d=3$, the bound~\eqref{eq:simple-bound} holds for all parameters $\beta$, with a strict inequality.

\begin{theorem}
\label{thm:d3bound}
There is a strict sharp inequality for all $A\in\ZZ^{3\times n}$ and $\beta\in\CC^3$: 
\begin{equation}
\label{eq:main-d3bound}
\dfrac{\rank(M_A(\beta))}{\vol(A)}< 2.
\end{equation}
\end{theorem}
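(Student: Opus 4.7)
My approach combines the combinatorial rank formula of~\cite{berkesch} with the face-volume estimates of~\cite{BerFer}, extending the latter beyond the simple-parameter regime and specializing to $d=3$. The starting point is the decomposition of the rank jump $\rank(M_A(\beta)) - \vol(A)$ as a sum of local contributions indexed by faces $F$ of the cone $\RR_{\geq 0} A$, provided by the Ext-theoretic rank formula of~\cite{MMW}. In the simple-parameter case, the argument of~\cite{BerFer} bounds each face's contribution by a suitable face volume and invokes a key inequality of the form $\sum_F \vol(F) \leq (d-1)\vol(A)$; substituting $d=3$ immediately recovers the nonstrict bound $\rank(M_A(\beta))/\vol(A)\leq 2$ for simple $\beta$.

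The new ingredient is the treatment of non-simple $\beta$. I would use Berkesch's rank formula to express $\rank(M_A(\beta))$ as an alternating sum over the poset of ranking lattices $\EE^{\beta}$ at $\beta$. Since $d=3$, the faces of $\Delta_A$ that can produce ranking lattices are only vertices, edges, and facets, so the poset of ranking lattices that can coincide at a fixed $\beta$ admits a short explicit classification. For each such configuration I would bound the contribution of the overlapping ranking lattices by exploiting the internal cancellations in the alternating sum together with the three-dimensional face-volume inequality inherited from the simple case, reducing the whole statement to a finite list of geometric subcases over the face poset of $\Delta_A$.

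To upgrade to strict inequality, I would show that equality $\rank(M_A(\beta)) = 2\vol(A)$ would force every face of $\Delta_A$ to saturate its maximal jump simultaneously; the pointedness of $A$ and the convex geometry of its cone in $\RR^3$ then produce a contradiction via a strict form of the volume inequality $\sum_F \vol(F) < 2\vol(A)$. The main obstacle I expect is precisely the non-simple step: without simpleness, overlapping ranking lattices contribute with alternating signs, and controlling the resulting Euler-characteristic-like sum requires a delicate inclusion-exclusion argument tailored to the short three-dimensional face poset. Making this cancellation precise while preserving the strict bound, and ruling out the equality case $\rank = 2\vol(A)$ for every pointed $A\in\ZZ^{3\times n}$ and every $\beta\in\cE(A)$, is the crux of the proof.
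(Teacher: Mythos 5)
Your plan correctly isolates the two regimes (simple versus non-simple $\beta$) and correctly identifies the hard part, but it leaves that hard part unresolved: for non-simple $\beta$ you propose to control the alternating sum in Berkesch's general rank formula by ``a delicate inclusion-exclusion argument,'' which is exactly the difficulty that the paper's introduction flags as the reason those formulas do not yield good upper bounds. The paper avoids this entirely by invoking Okuyama's sign-free formula for $d=3$ (Theorem~\ref{thm:d=3}), in which each equivalence class $\Lambda$ contributes a manifestly nonnegative quantity $j_\Lambda(\beta)$ expressed through reduced homology of a small order complex; summing these gives the clean bound $\rank(M_A(\beta))-\vol(A)\leq\bigl(\sum_{F}\vol_{\ZZ^3\cap\CC F}(F)\bigr)-1$ over the edges $F$ of $\Delta_A$ through the origin. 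Without Okuyama's result (or an equivalent positivity statement), your ``Euler-characteristic-like'' cancellation step has no concrete mechanism, and the proposal does not supply one.

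The second missing ingredient is the geometric input that makes the extremal cases finite. Your proposed volume inequality $\sum_F\vol(F)\leq (d-1)\vol(A)$ is not what is needed and is not established; the paper instead uses an Ehrhart-theoretic bound (Lemma~\ref{lem:Haase}): for any vertex $v$ of a lattice $3$-polytope $\Delta$, the edges $\ell_1,\dots,\ell_r$ through $v$ satisfy $\sum_j\vol(\ell_j)\leq\vol(\Delta)+2$, proved via the nonnegativity of the $h^*$-coefficients. Chaining this with Okuyama's bound forces any putative counterexample to make two of three inequalities into equalities, which pins $\Delta_A$ down to a lattice polytope of degree one or two; these are then dispatched using the Batyrev--Nill classification (exceptional simplices and Lawrence prisms, handled in Lemmas~\ref{lem:exceptional-triangle} and~\ref{lem:lawrence}) and Treutlein's classification of degree-two polytopes together with an Ehrhart polynomial computation. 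Your ``finite list of geometric subcases over the face poset'' does not engage with this: the case analysis that actually occurs is over classes of lattice polytopes near the Ehrhart-theoretic extreme, not over configurations of ranking lattices, and ruling them out requires the classification theorems and explicit volume computations that your outline omits.
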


\subsection*{Outline}
We begin with results in Ehrhart theory in~\S\ref{sec:ehrhart} and rank jumps in~\S\ref{sec:GKZ-prelims}. 
Next, in~\S\ref{sec:deg1}, we consider rank jumps in the case that $\Delta_A$ has degree one. 
Finally, the proof of Theorem~\ref{thm:d3bound} is completed in~\S\ref{sec:general-bound-dim-3}. 

\subsection*{Acknowledgements}
We thank Christian Haase for stating and proving Lemma~\ref{lem:Haase}, providing the catalyst for this article. 
We are also grateful to Laura Felicia Matusevich,  Vic Reiner, and Uli Walther for helpful conversations related to this work. 

\section{Preliminaries on Ehrhart theory}
\label{sec:ehrhart}

Let $\Delta\subseteq \RR^d$ be a lattice polytope, that is, the convex hull in $\RR^d$ of a finite set of points in $\ZZ^d$.
We denote by $|\Delta\cap\ZZ^d|$ the cardinality of the set of lattice points in $\Delta$.
The function $ g_\Delta (t): \NN \longrightarrow \NN$ defined by 
\[
g_\Delta (k):=|k\Delta \cap \ZZ^d| 
\] 
counts the number of lattice points in the $k$-fold dilatation of $\Delta$. 
Ehrhart proved that this function is a polynomial in $k$, which is now called the \emph{Ehrhart polynomial} of $\Delta$. Moreover, he also proved that when the polytope $\Delta\subseteq \RR^d$ is $d$-dimensional the degree of $g_\Delta (k)$ is $d$ and its leading coefficient is equal to the \emph{normalized volume} of $\Delta$ with respect to $\ZZ^d$, that is, the Euclidean  volume of $\Delta$ divided by $d!$~\cite{ehrhart}. 
The \emph{Ehrhart series} of $\Delta$ is the generating function 
\[
E_\Delta(t):=\sum_{k\geq 0} g_\Delta (k) t^k.
\]
The following result is well known (see, for example,~\cite{Batyrev-Nill} and the references therein).

\begin{theorem}
\label{thm:Ehrhart-series}
For a lattice polytope $\Delta\subseteq \RR^d$, there exists 
\[
h^*(t)=1+ h_1^* t + \cdots + h_k^* t^k,
\]
called the \emph{$h$-polynomial} of $\Delta$ 
such that the following properties hold:
\begin{enumerate}
\item $E_\Delta(t)=h^*(t)/(1-t)^{d+1}$,
\item $h^*_j\in\ZZ_{\geq 0}$ for all $j=1,\ldots, k$,
\item $\vol(\Delta)=1+h_1^*+\cdots + h_k^*$, 
\item $h_1^*=|\Delta\cap \ZZ^d|-d-1$, 
and 
\item the leading coefficient $h_k^* = |(d + 1 - k)\Delta^\circ \cap \ZZ^d|$, where $\Delta^\circ$ denotes the interior of $\Delta$.
\end{enumerate}
\end{theorem}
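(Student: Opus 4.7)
The plan is to derive items (1), (3), and (4) from Ehrhart's theorem by a direct generating-function argument, to establish item (5) via Ehrhart-Macdonald reciprocity, and to invoke Stanley's nonnegativity theorem for Cohen-Macaulay semigroup rings for (2), which I expect to be the main obstacle.

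First, for (1), (3), and (4), I would use the basic identity $\sum_{k \geq 0} \binom{k+d-j}{d} t^k = t^j/(1-t)^{d+1}$ (valid for $0 \leq j \leq d$), together with the fact that the polynomials $\{\binom{k+d-j}{d}\}_{0 \leq j \leq d}$ form a $\QQ$-basis for polynomials in $k$ of degree at most $d$. Expanding the Ehrhart polynomial in this basis as $g_\Delta(k) = \sum_{j=0}^d h_j^* \binom{k+d-j}{d}$, multiplying by $t^k$, and summing over $k$ produces
\[
E_\Delta(t) = \Bigl(\textstyle\sum_{j} h_j^* t^j\Bigr)\Big/(1-t)^{d+1},
\]
which gives (1), with $h^*(t) \defeq \sum_j h_j^* t^j$ and $k$ its degree. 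Substituting $k = 0$ forces $h_0^* = g_\Delta(0) = 1$, and substituting $k = 1$ yields $|\Delta \cap \ZZ^d| = (d+1)h_0^* + h_1^*$, i.e.\ (4). Comparing leading coefficients in $k$, Ehrhart's theorem identifies the leading coefficient of $g_\Delta$ with $\vol(\Delta)/d!$, while the expansion on the right gives $(\sum_j h_j^*)/d! = h^*(1)/d!$, which yields (3).

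Next, for (5), I would invoke Ehrhart-Macdonald reciprocity, $g_\Delta(-k) = (-1)^d |k \Delta^\circ \cap \ZZ^d|$ for integers $k \geq 1$. Translated into generating functions, this reads
\[
E_{\Delta^\circ}(t) \defeq \sum_{k \geq 1} |k \Delta^\circ \cap \ZZ^d|\, t^k = \frac{t^{d+1}\, h^*(1/t)}{(1-t)^{d+1}}.
\]
The numerator $\sum_{j=0}^k h_j^* t^{d+1-j}$ has lowest-degree term $h_k^* t^{d+1-k}$, so extracting the coefficient of $t^{d+1-k}$ on both sides produces (5). Since $E_{\Delta^\circ}(t)$ has no constant term, this also forces $d+1-k \geq 1$, so $h^*$ has degree at most $d$ and the sum in (3) is finite.

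The main obstacle is the nonnegativity statement (2). My plan is to consider the cone $C = \RR_{\geq 0}\cdot(\{1\}\times\Delta) \subseteq \RR^{d+1}$, the normal affine semigroup $S = C \cap \ZZ^{d+1}$, and its semigroup algebra $R = \CC[S]$, graded by the last coordinate. By Hochster's theorem, $R$ is Cohen-Macaulay of Krull dimension $d+1$, and its Hilbert series is $E_\Delta(t) = h^*(t)/(1-t)^{d+1}$ by construction. Stanley's general theorem on Hilbert series of Cohen-Macaulay graded algebras will then imply that $h^*$ has nonnegative integer coefficients: after quotienting $R$ by a homogeneous system of parameters (with passage to a Veronese refinement if $R$ is not standard graded), one obtains a finite-dimensional $\NN$-graded algebra whose Hilbert series is $h^*(t)$, so each $h_j^*$ is a nonnegative integer realized as the dimension of a graded piece.
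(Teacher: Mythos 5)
The paper offers no proof of this statement at all---it is quoted as ``well known'' with a pointer to Batyrev--Nill and the references therein---so the comparison here is between your worked argument and a bare citation. Your derivation is the standard one and is essentially correct: items (1), (3), (4) follow as you say from expanding $g_\Delta$ in the basis $\binom{k+d-j}{d}$ and evaluating at $k=0,1$ and at the leading coefficient, and item (5) follows correctly from Ehrhart--Macdonald reciprocity by reading off the lowest-order coefficient of $t^{d+1-k}h^*(1/t)/(1-t)^{d+1}$, which also bounds $k\leq d$. Two caveats on the details. First, the whole theorem (and in particular your leading-coefficient step for (3)) tacitly requires $\Delta$ to be full-dimensional in $\RR^d$; otherwise $E_\Delta(t)(1-t)^{d+1}$ has negative coefficients. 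The paper makes the same tacit assumption, so this is harmless, but it should be said. Second, for the nonnegativity in (2), the Hochster-plus-Stanley route is indeed the classical proof, but your parenthetical about ``passage to a Veronese refinement'' is a misstep if taken literally: a Veronese subring has a different Hilbert series and would compute the $h^*$-polynomial of a dilate of $\Delta$, not of $\Delta$. It is also unnecessary. Although $R=\CC[C\cap\ZZ^{d+1}]$ need not be generated in degree one, its degree-one piece contains the vertices of $\Delta$ lifted to height one, and the ideal these generate has finite colength: if $v$ lies in the cone at height $k$ larger than the number of vertices, write $v/k$ as a convex combination of vertices, pick a vertex $u$ whose coefficient is at least the reciprocal of the number of vertices, and check that $v-u$ still lies in the cone at height $k-1$. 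Hence over the infinite field $\CC$ a \emph{linear} homogeneous system of parameters exists by prime avoidance, and Stanley's theorem for Cohen--Macaulay graded algebras applies directly to give $h_j^*\in\ZZ_{\geq 0}$. With that repair, your proof is complete and supplies exactly what the paper delegates to the literature.
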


The \emph{degree} of $\Delta$, denoted $\deg (\Delta)$, is defined to be the degree of the $h$-polynomial of $\Delta$.
An interesting fact is that 
\[
\deg (\Delta)=\min \{j \in \NN \mid |i \Delta^\circ \cap \ZZ^d |=\nothing, \; \forall 1\leq i \leq d-j \}. 
\] 

It follows from Theorem~\ref{thm:Ehrhart-series} that lattice polytopes of degree zero are basic simplices, that is, lattice polytopes whose vertices form an affine lattice basis of $\ZZ^d$. 
Batyrev and Nill classified those polytopes having degree one~\cite[Theorem~2.5]{Batyrev-Nill}; they proved that $\deg(\Delta)\leq 1 $ if and only if $\Delta$ is an \emph{exceptional simplex} or a \emph{Lawrence prism}, which we now define. 

First, if $F$ is a face of $\Delta$ such that 
$|\Delta \cap \ZZ^d |-|F\cap \ZZ^d|-\codim(F)=0$, then we say that $\Delta$ is an \emph{iterated pyramid} over $F$.
An \emph{exceptional triangle} is a $2$-dimensional basic simplex multiplied by $2$.
An \emph{exceptional simplex} is a simplex that is the $(d-2)$-fold pyramid over an exceptional triangle.
In other words, 
$\Delta$ is the convex hull in $\RR^d$ of 
\[
e_0, e_0+2(e_1-e_0),e_0+2(e_2-e_0),e_3,\dots,e_d
\]
where $e_0,\ldots, e_d$ is some affine lattice basis of $\ZZ^d$.
Finally, a \emph{Lawrence prism} of heights $b_1,\ldots, b_d\geq 0$, denoted by $L(b_1,\ldots, b_s)$, is the convex hull in $\RR^d$ of
\[
e_0, e_0+b_1 (e_d-e_0), e_1, e_1+b_2(e_d-e_0), \ldots , e_{d-1}, e_{d-1}+b_d(e_d-e_0),
\]
where $e_0,\ldots, e_d$ is some affine lattice basis of $\ZZ^d$.

A Lawrence prism $L(b_1,\ldots,b_d)$ has degree one if $b_1+\cdots+b_d\geq 2$ \cite[Proposition 2.4]{Batyrev-Nill}. Otherwise, it is a basic simplex of degree zero.

\begin{remark}
\label{remark: Lawrence prism volume}
The normalized volume of the Lawrence prism $L(b_1,\ldots, b_d)$ is $b_1+\cdots+b_d$.
\end{remark}

\section{Preliminaries on rank jumps}
\label{sec:GKZ-prelims}

In this section, we return to the setting of a pointed matrix $A\in\ZZ^{d\times n}$ with $\ZZ A =\ZZ^d$. 
We will soon restrict to the case $d=3$, but it is not needed for the following definitions.
Recall that $\Delta_A \subseteq \RR^d$ denotes the convex hull of the columns of $A$ and the origin. The set of columns of $A$ will be also denoted by $A$. 
A submatrix $F$ of $A$ (or a subset of its set of columns) is called a \emph{face} of $A$, denoted $F\preceq A$, if $\Delta_F$ is a face of the polytope $\Delta_A\subseteq \RR^d$ and $A \cap \Delta_F =F$. In particular, the empty set and $A$ are faces of $A$. For a face $F\preceq A$, consider the union of the lattice translates
\begin{align*}
\EE_F^\beta\defeq 
\big[\ZZ^d\cap(\beta+\CC F) \big]\minus(\NN A+\ZZ F) = \bigsqcup_{b\in B_F^\beta} (b+\ZZ F),
\end{align*}
where $B_F^\beta \subseteq \ZZ^d$ is a set of lattice translate representatives. 
Since $|B^\beta_{F}|$ is the number of translates of $\ZZ F$ appearing in $\bbE_F^\beta$, it is by definition equal to the difference between $[\ZZ^d\cap \QQ F:\ZZ F]$ and the number of translates of $\ZZ F$ along $\beta+\CC F$ that are contained in $\NN A + \ZZ F$.

Given the set
$\cJ(\beta)\defeq\{(F,b)\mid F\preceq A,\, b\in B_F^\beta \}$, 
the \emph{ranking lattices} of $A$ at $\beta$ are defined to be 
\begin{align*}
\EE^\beta 
  \defeq \bigcup_{(F,b)\in \cJ(\beta)} (b+\ZZ F).
\end{align*}
Note that the ranking lattices of $A$ at $\beta$ are precisely the union of those sets $(b+\ZZ F)$ contained in $\ZZ^d \setminus \NN A$ such that $\beta\in (b+\CC F)$. This is closely related to the set of holes of the affine semigroup $\NN A$, namely the set $(\ZZ^d \cap \RR_{\geq 0}A)\setminus \NN A$. 

A rank jumping parameter $\beta$ is \emph{simple} (for a face $G\preceq A$) if the set of maximal pairs $(F,b)$ in $\cJ(\beta)$ with respect to inclusion on $b+\ZZ F$ all correspond to a unique face $G\preceq A$.

The main result in \cite{berkesch} states how the rank of $M_A(\beta)$ can be computed from the combinatorics of $\EE^\beta$ and $\Delta_A$. 
An explicit formula for the rank is given when the rank jumping parameter $\beta$ is \emph{simple} for a face $G\preceq A$ (see \cite{okuyama} for this particular case); in this case, 
\begin{equation}\label{eqn:formula-simple-rank-jump}
\rank (M_A (\beta))=\vol(A)+ |B_G^\beta|\cdot (\codim(G)-1)\cdot \vol_{\ZZ G}(G).
\end{equation} 

\begin{example}
\label{ex:2faces}
It is shown in~\cite[Example~6.21]{berkesch} that if $\beta\in\CC^d$ is such that the ranking lattices of $A$ at $\beta$ involve only two faces, $F_1$ and $F_2$, then 
the rank jump of $M$ at $\beta$ is
\begin{align}\label{24}
\rank(M_A(\beta)) - \vol(A) = 
    \sum_{i=1}^2 \left(
    |B_{F_i}^{\beta}| \cdot [\codim(F_i)-1] \cdot \vol(F_i)
    \right) + |B_G^{\beta}| \cdot C^{\beta} \cdot\vol(G),
\end{align}
where 
$G = F_1\cap F_2$ and 
the constant $C^{\beta}$ is given by
\begin{eqnarray*}
C^{\beta} = \binom{\codim(G)}{2} - \codim(G) + 1 
    - \binom{\codim(F_1)}{2} - \binom{\codim(F_2)}{2} +
    \binom{\codim(\CC F_1+\CC F_2)}{2}. 
\end{eqnarray*}
\end{example}

When $d=3$, Okuyama~\cite{okuyama} provided a formula for the rank of $M_A(\beta)$, as follows. 

\begin{theorem}\cite[Theorem~2.6]{okuyama}
\label{thm:d=3}
Let $d=3$ and $\beta\in\CC^3$. 
Define an equivalence relation on $\cJ(\beta)$ by 
\[
(F,b)\sim (G,c) 
\quad\text{if and only if}\quad
(b+\ZZ F)\cap (c+\ZZ G)\neq \varnothing.
\] 
Let $\cF_A(\beta)$ denote the set of equivalence classes of $\cJ(\beta)$ under $\sim$. 
For each $\Lambda\in\cF_A(\beta)$, let 
$\cJ_\Lambda^\beta$ denote the order complex on the face poset of faces $F$ with $(F,b)\in \Lambda$ for some $b\in\ZZ^3$. 
Then the rank jump of $A$ at $\beta\in\CC^d$ is given by 
\[
\rank(M_A(\beta)) - \vol(A) = 
\sum_{\Lambda\in \cF_A(\beta)} j_\Lambda(\beta),
\]
where 
\[
j_\Lambda(\beta) := 
\begin{cases}
\displaystyle\sum_{\text{rays } F\in\cJ_\Lambda^\beta} \left(\vol(F) - 1\right) + m-1
& \text{if $\widetilde{H}_{p}(\cJ_\Lambda^\beta)\cong 0$ for $p\neq 0$, $\widetilde{H}_{0}(\cJ_\Lambda^\beta)\cong \CC^{m-1}$ with $m>1$,}\\
\vol(F)
& \text{if $\widetilde{H}_{p}(\cJ_\Lambda^\beta)\cong 0$ for all $p$, so $\cJ_\Lambda^\beta$ consists of a ray $F$,}\\
2 
& \text{if $\widetilde{H}_{-1}(\cJ_\Lambda^\beta)\cong \CC$, $\widetilde{H}_{p}(\cJ_\Lambda^\beta)\cong 0$ for $p\neq -1$,}\\
0 
& \text{otherwise.}
\end{cases}
\]
\end{theorem}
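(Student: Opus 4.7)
The plan is to apply Okuyama's formula (Theorem~\ref{thm:d=3}) to write
\[
\rank(M_A(\beta)) - \vol(A) = \sum_{\Lambda\in\cF_A(\beta)} j_\Lambda(\beta)
\]
and prove that the right-hand side is strictly less than $\vol(A)$ for every $\beta$. I organize the argument by the Ehrhart degree of $\Delta_A$, because this degree simultaneously governs the available volume (via Theorem~\ref{thm:Ehrhart-series}) and the possible combinatorial structure of the holes of $\NN A$ that generate the ranking data.

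When $\deg(\Delta_A)=0$, Theorem~\ref{thm:Ehrhart-series} shows that $\Delta_A$ is a basic simplex, $\NN A$ has no holes in $\RR_{\geq 0}A$, and $\cJ(\beta)=\varnothing$, giving ratio $1$. When $\deg(\Delta_A)=1$, the Batyrev--Nill classification forces $\Delta_A$ to be either an exceptional simplex (with $\vol(A)=4$) or a Lawrence prism $L(b_1,b_2,b_3)$ (with $\vol(A)=b_1+b_2+b_3$, by Remark~\ref{remark: Lawrence prism volume}). For each such polytope I would enumerate the holes of $\NN A$, identify the equivalence classes $\Lambda$ they generate, and evaluate Okuyama's formula term by term; this is the content of \S\ref{sec:deg1}.

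For $\deg(\Delta_A)\geq 2$, Theorem~\ref{thm:Ehrhart-series} gives $\vol(A)=1+h_1^*+h_2^*+h_3^*$ with $h_2^*>0$, providing extra volume above what the ranking data can absorb. The strategy is to bound each $j_\Lambda(\beta)$ by the volumes of the faces appearing in $\cJ_\Lambda^\beta$: case 2 of Okuyama contributes $\vol(F)$ for a single ray $F$; case 3 contributes $2$; case 1 contributes a sum of ray volumes plus the correction $m-1$; and case 4 contributes $0$. Distinct equivalence classes $\Lambda$ correspond to distinct cosets of $\beta$ modulo the lattices $\ZZ F$ of the faces $F$ involved, which lets me sum the contributions over $\Lambda$ and compare to $\vol(A)$ via the Ehrhart identity. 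The strict inequality then comes from the existence of at least one lattice point in $\Delta_A$ counted by $h_k^*$ for some $k\geq 2$ that cannot be matched to any single ranking contribution.

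The main obstacle is the degree-one case, especially Lawrence prisms $L(b_1,b_2,b_3)$ in which only one or two heights are positive. In that regime the polytope degenerates, the holes of $\NN A$ align along a single lattice direction, and many pairs $(F,b)$ can belong to one equivalence class $\Lambda$ stacked inside a single $2$-face of $\Delta_A$. Showing that Okuyama's $m-1$ correction (rather than $m$) produces the needed cancellation, and that no $\beta$ saturates $\sum_\Lambda j_\Lambda(\beta)=\vol(A)$, requires a careful hole-by-hole analysis and relies on Lemma~\ref{lem:Haase} of Haase (acknowledged in the introduction) to close the strict inequality. Once the degree-one case is settled, the higher-degree cases follow with room to spare from the positivity in the Ehrhart expansion.
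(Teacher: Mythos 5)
There is a fundamental mismatch here: the statement you were asked to prove is Okuyama's rank formula itself (Theorem~\ref{thm:d=3}, quoted from \cite[Theorem~2.6]{okuyama}), but your proposal opens with ``The plan is to apply Okuyama's formula (Theorem~\ref{thm:d=3})'' and then sketches a proof of the paper's main result, Theorem~\ref{thm:d3bound}. In other words, you assume the very statement to be proved and use it as the key ingredient in an argument for a different theorem. Nothing in your proposal addresses why the rank jump $\rank(M_A(\beta))-\vol(A)$ decomposes as a sum over the equivalence classes $\Lambda\in\cF_A(\beta)$, nor why each contribution $j_\Lambda(\beta)$ is computed by the reduced homology of the order complex $\cJ_\Lambda^\beta$ according to the four listed cases. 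A genuine proof of this statement requires the homological machinery behind rank jumps --- Euler--Koszul homology and the ranking toric modules of \cite{MMW} and \cite{berkesch}, or Okuyama's original analysis for toric threefolds --- which relates the holes of the semigroup $\NN A$ to the local cohomology of $\CC[\del]/I_A$ and then to the rank via a spectral sequence whose combinatorial shadow is exactly the case analysis on $\widetilde{H}_p(\cJ_\Lambda^\beta)$. None of that appears in your write-up.

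Note also that the paper itself offers no proof of Theorem~\ref{thm:d=3}; it is an attributed citation used as a black box. So even setting aside the circularity, your proposal cannot be compared favorably against the source: what you have written is a (reasonable, and broadly aligned with the paper's \S\ref{sec:deg1} and \S\ref{sec:general-bound-dim-3}) outline for Theorem~\ref{thm:d3bound}, organized by the Ehrhart degree of $\Delta_A$ and invoking Lemma~\ref{lem:Haase} and the Batyrev--Nill classification. If the intended target was really Theorem~\ref{thm:d=3}, you need to start over from the definition of $\cJ(\beta)$ and the ranking lattices $\EE^\beta$, and derive the formula for $j_\Lambda(\beta)$; if the intended target was Theorem~\ref{thm:d3bound}, your outline is pointed in the right direction but is attached to the wrong statement.
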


\section{Rank jumps for $M_A(\beta)$ when $\Delta_A$ has degree one}
\label{sec:deg1}

Recall that we denote the convex hull of the columns of $A$ and the origin by $\Delta_A$. 
When $\Delta_A$ is a lattice polytope of degree one, it is an exceptional simplex or a Lawrence prism of heights $b_1,\ldots, b_d \in \NN$.
In this section, we compute the possible rank jumps for $M_A(\beta)$ when $\Delta_A$ is the former or if it is the latter and $d=3$.  

\begin{lemma}
\label{lem:exceptional-triangle}
If $\Delta=\Delta_A$ is an exceptional simplex, then $M_A(\beta)$ has no rank-jumping parameters. 
\end{lemma}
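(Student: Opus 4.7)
The plan is to apply Okuyama's rank-jump formula (Theorem~\ref{thm:d=3}) after classifying the possible configurations of $A$ in dimension three. By a monomial change of coordinates that preserves both rank and volume, we may assume $\Delta_A = \conv(0, 2e_1, 2e_2, e_3)$ in the standard basis of $\ZZ^3$, so the lattice points in $\Delta_A$ are $\{0, e_1, e_2, 2e_1, 2e_2, e_1+e_2, e_3\}$. Since $A$ is pointed, $0 \notin A$; since $\Delta_A = \conv(A \cup \{0\})$, the vertices $\{2e_1, 2e_2, e_3\}$ must appear in $A$; and $\ZZ A = \ZZ^3$ forces extra columns from $\{e_1, e_2, e_1+e_2\}$. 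Up to the symmetry $e_1 \leftrightarrow e_2$, the minimal configurations for $A$ are either $\{e_1, e_2\} \subseteq A$, or $\{e_1, e_1+e_2\} \subseteq A$ with $e_2 \notin A$.

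If $\{e_1, e_2\} \subseteq A$, then $\NN A \supseteq \{e_1, e_2, e_3\}$ and hence $\NN A = \NN^3$, so $\CC[\NN A] \cong \CC[x_1, x_2, x_3]$ is a polynomial ring. This rules out any local-cohomology obstruction, and therefore $M_A(\beta)$ has no rank-jumping parameters.

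In the remaining case, say $A \supseteq \{2e_1, 2e_2, e_3, e_1, e_1+e_2\}$ with $e_2 \notin A$, a direct computation shows that the holes $(\RR_{\geq 0} A \cap \ZZ^3) \setminus \NN A$ are exactly $\{(0, 2k{+}1, m) : k, m \geq 0\}$, which lie entirely in the single coset $e_2 + \ZZ F$ for the edge-face $F = \{2e_2, e_3\}$ (with $\codim(F) = 1$). A face-by-face check reveals that the only $F' \preceq A$ with $\EE_{F'}^\beta \neq \varnothing$ for some $\beta$ are $\varnothing$, the vertex-faces $\{2e_1\}, \{2e_2\}, \{e_3\}$, and the three edges $\{2e_1, 2e_2, e_1+e_2\}$, $\{2e_2, e_3\}$, $\{2e_1, e_3\}$ of the facet of $\Delta_A$ opposite to the origin. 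For each equivalence class $\Lambda \in \cF_A(\beta)$, I would verify two structural points. First, $\Lambda$ is never a single vertex-face, since whenever a vertex-face contributes, at least one covering edge-face also contributes and joins the same class. Second, the face poset of $\Lambda$ is connected, since any two contributing edge-faces of the facet share a common vertex-subface that also contributes and links them through a chain. Consequently $\Lambda$ is nonempty, $\cJ_\Lambda^\beta$ is connected, and $\cJ_\Lambda^\beta$ is not a single ``ray'' (by which I mean a single vertex-face corresponding to an extremal ray of $\RR_{\geq 0} A$), so the ``otherwise'' case of Theorem~\ref{thm:d=3} applies and yields $j_\Lambda(\beta) = 0$. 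Summing, $\rank(M_A(\beta)) = \vol(A)$ for every $\beta$.

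The main obstacle is verifying these structural claims uniformly rather than $\beta$-by-$\beta$. I expect the key input to be: whenever two incomparable contributing faces $F_1', F_2' \preceq A$ lie in a common equivalence class, the smallest face of $A$ containing them---equivalently, the unique face whose $\CC$-span equals $\CC F_1' + \CC F_2'$---must also contribute and lie in the same class, so that $\cJ_\Lambda^\beta$ is automatically connected. The codimension-one nature of $F = \{2e_2, e_3\}$ as the support of all holes reflects, through equation~\eqref{eqn:formula-simple-rank-jump}, the vanishing of the factor $(\codim(G) - 1)$ for simple rank-jumps at $G = F$, and gives the intuitive reason for the absence of rank jumps even though $\NN A$ is not normal.
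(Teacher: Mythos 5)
There is a genuine gap: your normal form $\Delta_A=\conv(0,2e_1,2e_2,e_3)$ does not cover all exceptional simplices. An exceptional simplex in $\RR^3$ is a pyramid over the exceptional triangle, and since $A$ is pointed the origin must be a vertex of $\Delta_A$ --- but it can be either a vertex of the exceptional triangle (your case) or the \emph{apex} of the pyramid. In the latter case $\Delta_A\cong\conv\bigl(0,(1,0,0),(1,2,0),(1,0,2)\bigr)$, and this is not $\GL_3(\ZZ)$-equivalent to your normal form: the lattice lengths of the three edges at the origin are $1,1,1$ here versus $2,2,1$ in your configuration, and these are invariants of any lattice-linear change of coordinates fixing the origin. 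The apex case behaves quite differently (for instance, with $A$ consisting of the three vertices plus $(1,1,0)$ and $(1,0,1)$, the holes of $\NN A$ lie along the facet opposite the origin, and $(1,1,1)$ is a hole), so it cannot be waved away; the paper isolates it as a separate case and disposes of it by an exhaustive check of the finitely many admissible column sets. Your argument as written simply never sees this case.

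Two further issues, less fatal but worth flagging. First, your list of contributing faces in the case $e_2\notin A$ is based on the wrong poset: faces of $A$ correspond to faces of the cone $\RR_{\geq 0}A$ (equivalently, faces of $\Delta_A$ containing the origin), so ``the three edges of the facet opposite the origin'' are not faces of $A$; what you call the edge-face $\{2e_2,e_3\}$ is really the codimension-one face $A\cap\RR_{\geq 0}\{e_2,e_3\}$. The numerics ($\codim=1$) happen to agree, but the order complexes you feed into Theorem~\ref{thm:d=3} are built on this poset, and your two ``structural points'' are asserted, not proved --- you acknowledge this yourself. Second, you can avoid the entire Okuyama analysis in your normal-form case: there $e_3$ is the unique lattice point of $\Delta_A$ outside the facet $z=0$, so $A$ is a pyramid over that facet in the sense of \cite[Definition 3.4]{reducibility}, and $\rank(M_A(\beta))=\rank(M_F(\beta^F))$ reduces everything to the two-dimensional exceptional triangle, where the only possible support of holes is a codimension-one face and formula~\eqref{eqn:formula-simple-rank-jump} gives a zero jump. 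This is the route the paper takes; it is shorter and also explains why the lemma holds for exceptional simplices in all dimensions (iterated pyramids), whereas your argument is confined to $d=3$ from the outset.
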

\begin{proof}
If $\Delta_A$ is an iterated pyramid over an exceptional triangle, then one of the following cases holds:
\begin{enumerate}
\vspace*{-2mm}
 \item[(i)] $\Delta_A$ is an exceptional triangle,
 \item[(ii)] $\Delta_A$ is (up to rigid transformation) the convex hull of the origin in $\RR^3$ and $\left[\begin{smallmatrix}
1&1&1\\0&2&0\\0&0&2\end{smallmatrix}\right]$,
\item[(iii)] $A$ is an \emph{iterated pyramid} as defined in \cite[Definition 3.4]{reducibility} over a face $F\preceq A$ for which $\Delta_F$ is a polytope of the form of $\Delta_A$ in cases (i) or (ii).
\end{enumerate}

For (iii), $\CC^d=\CC F \oplus \CC \overline{F}$ and $\rank (M_A(\beta))=\rank (M_F (\beta^F))$, where $\beta=\beta^F +\beta^{\overline{F}}$ for unique $\beta^F\in \CC F$ and $\beta^{\overline{F}}\in \CC \overline{F}$ (see \cite[Lemma 3.7]{reducibility}). Thus, it is enough to handle cases (i) and (ii).

For (i), we can assume for simplicity that $e_0$ is the origin of $\RR^2$. In this case, the vertices $2 e_1, 2 e_2$ of $\Delta_A$ are columns of $A$. Moreover, since $\ZZ A=\ZZ^d$, at least two elements of the set $\{ e_1, e_2, e_1+e_2\}$ are also columns of $A$.
If $e_1,e_2$ are columns of $A$, then $e_1+e_2$ is in $\NN A$ and $\NN A$ is normal. If $e_1+e_2$ and one $e_i$ are columns of $A$, then $\NN A$ has a one dimensional set of holes, given by $e_j+\NN\{e_j\}$ with $j\neq i$. As this is a codimension-one lattice translate in $\ZZ^2$ and it is the only set of holes in $\NN A$, it does not yield rank-jumping parameters by~\eqref{eqn:formula-simple-rank-jump}.

For (ii), since $\ZZ A = \ZZ^3$, it must be that at least two of the vectors in
$\left[\begin{smallmatrix}
	1&1&1\\1&0&1\\0&1&1 
\end{smallmatrix}\right]$
must also be in $A$. An exhaustive search reveals that none of these configurations yield any rank jumping parameters. 

Thus in all cases where $\Delta_A$ is the exceptional simplex, $M_A(\beta)$ admits no rank-jumping parameters~$\beta$. 
\end{proof}

The final case to consider in this section is that $\Delta_A$ is a Lawrence prism of heights $b_1,b_2, b_3 \in \NN$.

\begin{lemma}
\label{lem:lawrence}
If $\Delta$ is isomorphic to a three-dimensional Lawrence polytope of the form $L(b_1,b_2,b_3)$, then 
\[
\rank(M_A(\beta))<2\cdot \vol(A).
\] 
\end{lemma}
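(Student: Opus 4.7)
My plan is to apply Okuyama's formula from Theorem~\ref{thm:d=3}, writing the rank jump as
\[
\rank(M_A(\beta))-\vol(A)=\sum_{\Lambda\in\cF_A(\beta)}j_\Lambda(\beta),
\]
and bound this sum. After a lattice isomorphism I place the origin at a vertex of $\Delta_A=L(b_1,b_2,b_3)$, so that the remaining vertices are $b_1 e_3,\,e_1,\,e_1+b_2 e_3,\,e_2,\,e_2+b_3 e_3$. By Remark~\ref{remark: Lawrence prism volume}, $\vol(A)=b_1+b_2+b_3$, so the goal reduces to $\sum_\Lambda j_\Lambda(\beta)<b_1+b_2+b_3$.

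First I enumerate the faces of $A$ with $\dim\CC F<3$. Since $\RR_{\geq 0}A=\RR_{\geq 0}^3$, these are: the empty face; the three rays $\{b_1 e_3\},\{e_1\},\{e_2\}$ through the origin; and the three facets through the origin (the base triangle and the two side quadrilaterals). The two remaining facets of $\Delta_A$ are not faces of $A$ because their convex hulls with the origin are $3$-dimensional, so they contribute nothing to any $j_\Lambda$. The key sublattice indices $[\ZZ^3\cap\CC F:\ZZ F]$ are $b_1$ for the vertical ray, $\gcd(b_1,b_2)$ and $\gcd(b_1,b_3)$ for the two side facets, and $1$ otherwise; these bound the sizes $|B_F^\beta|$ appearing in the formula.

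Next I case-split on the homotopy type of $\cJ_\Lambda^\beta$ according to Theorem~\ref{thm:d=3}: $j_\Lambda$ equals $2$ in the empty-complex case, $\vol(F)$ in the single-ray case, $\sum_{\text{rays}}(\vol(F)-1)+m-1$ in the disconnected-contractible case with $m>1$ components, and $0$ otherwise. A key structural observation is that whenever an equivalence class contains a ray together with a side facet above it (or the empty face together with such faces), the resulting order complex is connected and contractible without being a single ray, so such classes land in the last case and contribute nothing. Thus the nonzero contributions come from: isolated empty-face classes (at most $2$); single-ray classes not absorbed by a side facet; and disconnected classes picking up the connectivity term $m-1$. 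Using the sublattice indices above, the contributions from vertical-ray cosets sum to at most $b_1-1$, because at least one coset of $b_1\ZZ e_3$ always lies in $\NN A+b_1\ZZ e_3$ (the coset of $0$); analogous strict bounds hold along each side facet via $\gcd(b_1,b_i)$.

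The main obstacle is the careful bookkeeping of how the three rays, the two side facets, and the empty-face class can co-occur within a single equivalence class. The strict inequality emerges from two facts: first, at least one coset of every sublattice lies in $\NN A+\ZZ F$ (because $0\in\NN A$), reducing each ray's maximal contribution by one; second, any class combining the vertical ray with a side facet merges several cosets, so the ``saved'' sum exactly cancels the $(m-1)$ connectivity correction, leaving a strict slack. Pinning down this cancellation, especially when $\gcd(b_1,b_2),\gcd(b_1,b_3)$ are both nontrivial and the vertical cosets split among the two side facets, is the delicate combinatorial step that ultimately yields the total bound strictly below $b_1+b_2+b_3$.
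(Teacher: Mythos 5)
Your overall strategy---reduce to Okuyama's formula and do a case analysis on the possible order complexes $\cJ_\Lambda^\beta$---is the same as the paper's, but the execution has a genuine gap at its center. You claim that any class $\Lambda$ containing a ray together with a facet above it has a contractible order complex that is ``not a single ray'' and therefore falls into the ``otherwise'' case with $j_\Lambda(\beta)=0$. That is a misreading of Theorem~\ref{thm:d=3}: the clause ``so $\cJ_\Lambda^\beta$ consists of a ray'' is not a hypothesis excluding other acyclic complexes from contributing. Indeed, the paper's own proof of this lemma computes $j_\Lambda(\beta)=v$ (the normalized volume of the $e_3$-edge over its own lattice) for a class whose maximal elements are a facet and the line $\ell_3$, and $j_\Lambda(\beta)=1$ when the line is $\ell_1$ or $\ell_2$. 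These mixed classes are precisely where the danger lies---they can contribute up to $b_1$ in total---so declaring them zero removes the actual difficulty of the lemma rather than resolving it. Relatedly, your ``$b_1-1$'' refinement rests on the assertion that the coset of $0$ always lies in $\NN A+\ZZ F$; but $B_F^\beta$ is computed inside $\ZZ^3\cap(\beta+\CC F)$, which need not contain the origin, so that saving is not available in general (the paper only uses the full index as an upper bound).

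Two further gaps: first, your face enumeration assumes $\RR_{\geq 0}A=\RR_{\geq 0}^3$ with exactly three rays through the origin, which fails when some $b_i=0$; the paper must treat $b_2=0$ or $b_3=0$ via the pyramid reduction and the $d=2$ bound of \cite{CDD}, and $b_1=0$ (four edges at the origin) via a separate count plus a check that the volume-$2$ and volume-$3$ configurations have empty exceptional arrangement. Second, the ``delicate combinatorial step'' you defer at the end is exactly the content of the lemma. The paper's resolution is more elementary than the cancellation you sketch: since $e_1$ and $e_2$ are vertices of the prism, $\ell_1$ and $\ell_2$ each have volume one and index one, so all contributions other than those along the $e_3$-axis total at most $1$, giving $\rank(M_A(\beta))-\vol(A)\leq 1+b_1<2+b_1\leq b_1+b_2+b_3=\vol(A)$ once $b_2,b_3\geq 1$. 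Without that observation, or a correct accounting of the mixed ray--facet classes, the proposal does not establish the strict inequality.
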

\begin{proof}
Since $M_A(\beta)$ is invariant under $\GL_3(\ZZ)$-transformation and its rank is invariant under reordering of the columns of $A$, we first note that if $\Delta = \Delta_A$ is isomorphic to a Lawrence prism, then we may assume for simplicity that $\Delta = L(b_1,b_2,b_3)$ as in \S\ref{sec:ehrhart}, where $e_0$ is the origin and $\{e_1, e_2, e_3\}$ is the standard basis for $\RR^3$.

If either $b_2$ or $b_3$ are zero, then $A$ is a pyramid over a face of dimension $2$ and this implies that $\rank (M_A(\beta))\leq \vol(A)+1$; indeed, the pyramid construction does not increase rank~\cite{reducibility}, and $\vol(A)+1$ is the maximal rank possible when $d=2$~\cite{CDD}. 
Thus, we can assume that $b_2, b_3\geq 1$. 

If $b_2,b_3\geq 1$ but $b_1=0$, then $\Delta$ has four edges that contain the origin, each of volume $1$ and lattice index $1$. 
Working from Theorem~\ref{thm:d=3}, there are six nontrivial potential order complexes $\cJ_\Lambda^\beta$ to consider, noting that each $\beta$ has a unique nontrivial $\Lambda\in\cF_A(\beta)$.
For the possible $\cJ_\Lambda^\beta$ with $\widetilde{H}_0(\cJ_\Lambda^\beta)\cong \CC^{m-1}$ for $m>1$, $j_A(\beta) = j_\Lambda(\beta) = m-1\leq 3$. 
In the remaining cases, $j_A(\beta) \in\{1,2\}$. 
It now follows that 
\begin{align}
\label{eq:b1=0:rankBound}
\rank(M_A(\beta))\leq 3+\vol(A). 
\end{align}
If $\vol(A)$ is $2$ or $3$, then $\Delta_A$ is equal to the convex hull of the origin and the columns of 
$\left[\begin{smallmatrix}
1 & 1 & 0 & 0\\
0 & 0 & 1 & 1\\
0 & 1 & 0 & 1
\end{smallmatrix}\right]$
or 
$\left[\begin{smallmatrix}
1 & 1 & 0 & 0\\
0 & 0 & 1 & 1\\
0 & 2 & 0 & 1
\end{smallmatrix}\right]$, 
respectively. In both cases, $\cE_A=\varnothing$, so $j_A(\beta)=0$.
Thus, if $b_1=0$, $b_2\geq 1$, $b_3\geq 1$, and $j_A(\beta)>0$, then $\vol(A)\geq 4$.
Therefore by~\eqref{eq:b1=0:rankBound}, 
$\rank(M_A(\beta))<2\cdot\vol(A)$ when $b_1=0$. 

We can assume for the rest of the proof that $b_1, b_2, b_3\geq 1$. Thus, the edges of $A$ are the lines $\ell_1=\RR e_1\cap A,
\ell_2=\RR e_2\cap A$, and $\ell_3=\RR e_3 \cap A$. 
However, since the vertices $e_1$ and $e_2$ of $\Delta$ are necessarily columns of $A$, 
$\vol_{\ZZ \ell_j }(\ell_j)=1$ for all $j=1,2$ and 
$|B_{\ell_j}^\beta| \leq 1$ for all $j=1,2$ and all $\beta\in \CC^3$. 

Again working from Theorem~\ref{thm:d=3}, there are seven possible order complexes $\cJ_\Lambda^\beta$ for a given $\Lambda\in\cF_A(\beta)$. 
We consider these options now.

Let $v$ denote the normalized volume of the convex hull of $A\cap \RR e_3$ with the origin inside the lattice spanned by $\ZZ (A\cap \RR e_3)$. 
If the maximal lattice translates under inclusion in $\Lambda$ are the three rays corresponding to $\ell_1$, $\ell_2$, and $\ell_3$, then $j_\Lambda(\beta) = 1+v$. 
If all maximal elements under inclusion in $\Lambda$ are facets, then Theorem~\ref{thm:d=3} implies that $j_\Lambda(\beta)=0$. 
The remaining cases involve maximal elements of $F_A(\beta)$ being a facet and a line, two lines, or one line. In each case, 
\[
j_\Lambda(\beta)= 
\begin{cases}
	v & \text{if (one of) the line(s) is $\ell_3$,}\\
	1 & \text{if $\ell_3$ is not among the lines.}
\end{cases}
\]
In all cases, the only additional $\Lambda$ available in $\cF_A(\beta)$ come from translates of $\ell_3$. Such $\Lambda$ will each have $j_\Lambda(\beta) = v$, and there are at most $[\ZZ e_3:\ZZ (A\cap \RR e_3)]-1$ such $\Lambda$ in $\cF_A(\beta)$.

Comparing the possible sums of $j_\Lambda(\beta)$ that are allowed in Theorem~\ref{thm:d=3} when computing the rank jump of $M_A(\beta)$ at $\beta$, it follows that 
\begin{align*}
\rank(M_A(\beta))-\vol(A) 
	&\leq 1+ [\ZZ e_3:\ZZ (A\cap \RR e_3)]\cdot v \\
	&= 1+ [\ZZ e_3:\ZZ (A\cap \RR e_3)]\cdot \vol_{\ZZ (A\cap \RR e_3)}(\conv(\{\boldzero, A\cap \RR e_3\}))\\
	&=1+b_3. 
\end{align*}
Finally, we rearrange the inequality to compute the desired result when $b_1+b_2\geq 2$, the last remaining case:
\begin{align*}
\rank(M_A(\beta)) 
	&\leq \vol(A)+1+b_3\\
	&< \vol(A) + 2+b_3\\
	&\leq \vol(A) +b_1+b_2+b_3\\
	&=2\cdot\vol(A). 
	\qedhere
\end{align*}
\end{proof}

\section{Upper bound for rank in dimension three}
\label{sec:general-bound-dim-3}

In this section, we prove Theorem~\ref{thm:d3bound}. First, we state a lemma shared with us by  Christian Haase.

\begin{lemma}
\label{lem:Haase}
Let $\Delta\subseteq \RR^3$ be a convex lattice polytope and $\ell_1,\ldots,\ell_r$ be the edges of $\Delta$ that contain a fixed vertex $v$. 
Then 
\begin{equation}
\label{eq:sum-vol-edges-bound}
\sum_{j=1}^r \vol(\ell_j)
\leq 
\vol(\Delta) + 2.
\end{equation}
Moreover, if equality holds in~\eqref{eq:sum-vol-edges-bound}, then $\Delta$ is a $3$-simplex with at least one facet of normalized volume one. 
\end{lemma}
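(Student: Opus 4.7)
The plan is to bound the left-hand side of~\eqref{eq:sum-vol-edges-bound} by $|\Delta\cap\ZZ^3|-1$ and then apply Ehrhart theory. Since each edge $\ell_j$ is a lattice segment of normalized volume $\vol(\ell_j)$, it contains exactly $\vol(\ell_j)+1$ lattice points, counting its two endpoints. Distinct edges through $v$ meet only at $v$, so the set of lattice points of $\ell_1\cup\cdots\cup\ell_r$ other than $v$ has cardinality $\sum_{j=1}^r \vol(\ell_j)$ and is contained in $\Delta$. Therefore
\[
\sum_{j=1}^r \vol(\ell_j)\ \leq\ |\Delta\cap\ZZ^3|-1.
\]
Parts (3) and (4) of Theorem~\ref{thm:Ehrhart-series} give $|\Delta\cap\ZZ^3|-1=h_1^*+3$ and $\vol(\Delta)+2=3+h_1^*+h_2^*+h_3^*$, and since $h_2^*,h_3^*\geq 0$ by part (2), the inequality~\eqref{eq:sum-vol-edges-bound} follows.

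For the equality clause, I would observe that equality in~\eqref{eq:sum-vol-edges-bound} forces both $h_2^*=h_3^*=0$ (hence $\deg(\Delta)\leq 1$) and that every lattice point of $\Delta\setminus\{v\}$ lies on some edge through $v$. By the Batyrev--Nill classification~\cite[Theorem~2.5]{Batyrev-Nill}, $\Delta$ must then be a basic simplex, the exceptional simplex, or a Lawrence prism $L(b_1,b_2,b_3)$. A basic simplex is already a $3$-simplex with all facets of normalized volume one, so the conclusion is immediate. For the exceptional simplex, a direct inspection at each vertex exhibits a lattice point of $\Delta$ that lies on no edge through that vertex (for example, in $\conv\{\boldzero,2e_1,2e_2,e_3\}$ the point $e_1+e_2$ witnesses the failure at $v=\boldzero$ and at $v=e_3$, while $e_2$ and $e_1$ serve at $v=2e_1$ and $v=2e_2$, respectively), so this subcase cannot achieve equality.

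The Lawrence prism subcase requires the most care. The lattice points of $L(b_1,b_2,b_3)$ decompose into three ``vertical'' columns above the base vertices; the edge equality condition at a chosen vertex $v$ forces the two columns not passing through $v$ to have height zero. The polytope therefore collapses to a tetrahedron of the form $\conv\{\boldzero,b\cdot e_3,e_1,e_2\}$ (up to affine equivalence), which is a $3$-simplex containing the basic facet $\conv\{\boldzero,e_1,e_2\}$ of normalized volume one. The main obstacle I expect is the bookkeeping in this Lawrence prism analysis: one must identify which vertices of the prism can support equality and verify that the surviving degenerate polytope indeed has a facet of normalized volume one.
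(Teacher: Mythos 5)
Your proof of the inequality~\eqref{eq:sum-vol-edges-bound} is correct and is exactly the paper's argument: the edges through $v$ contribute $\sum_j\vol(\ell_j)$ lattice points of $\Delta$ other than $v$, and parts (2)--(4) of Theorem~\ref{thm:Ehrhart-series} then give $|\Delta\cap\ZZ^3|-1=h_1^*+3\leq\vol(\Delta)+2$.

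The gap is in your equality analysis, specifically in the Lawrence prism case. The claim that ``the edge equality condition at a chosen vertex $v$ forces the two columns not passing through $v$ to have height zero'' fails when the column through $v$ itself has height zero: in that situation the two side facets meeting at $v$ degenerate from quadrilaterals to triangles, so $v$ acquires edges running to the \emph{top} vertices of the other two columns, and these extra edges can absorb the remaining lattice points. Concretely, take $\Delta=L(0,1,1)=\conv\{\boldzero,\,e_1,\,e_1+e_3,\,e_2,\,e_2+e_3\}$ and $v=\boldzero$. Then $\vol(\Delta)=2$ and $|\Delta\cap\ZZ^3|=5$, while $v$ lies on the four edges $[\boldzero,e_1]$, $[\boldzero,e_2]$, $[\boldzero,e_1+e_3]$, $[\boldzero,e_2+e_3]$, each of normalized volume one; hence $\sum_j\vol(\ell_j)=4=\vol(\Delta)+2$, yet $\Delta$ has five vertices and is not a simplex. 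So your argument for the ``moreover'' clause cannot be completed as written --- and this example in fact shows that the clause itself is false as stated. (The paper's own justification of that clause is a one-sentence appeal to the Batyrev--Nill classification and overlooks the same configuration.) This does not endanger the main theorem: the equality clause of Lemma~\ref{lem:Haase} is never invoked in the proof of Theorem~\ref{thm:d3bound}, where the degree-one polytopes are instead handled directly in Lemmas~\ref{lem:exceptional-triangle} and~\ref{lem:lawrence}, the latter explicitly accounting for the four-edges-at-the-origin configuration of $L(0,b_2,b_3)$. Your treatment of the basic simplex, the exceptional simplex, and the Lawrence prisms whose column through $v$ is nondegenerate is correct.
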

\begin{proof}
Since $\vol (\ell)=|\ZZ^3 \cap \ell|-1$ for any edge $\ell$, 
then by Theorem~\ref{thm:Ehrhart-series}, 
\begin{equation}\label{eq:inequalities_bc}
\sum_{j=1}^r \vol(\ell_j) +1 
\leq 
|\Delta\cap\ZZ^3| 
= 
h_1^* +4 
\leq 
\vol(\Delta) + 3.
\end{equation}
This yields the desired inequality. 
Moreover, if
\begin{equation}
\label{eq:last-ineq}
h_1^* +4 
= 
\vol(\Delta) + 3,
\end{equation}
then $h_2^*=h_3^*=0$, 
so $\Delta$ has degree $1$. 
Batyrev and Nill characterized all polytopes with $h$-polynomial of degree one in~\cite[Theorem 2.5]{Batyrev-Nill}. 
Within this classification, the only polytopes for which the first inequality in~\eqref{eq:inequalities_bc} is an equality are precisely the simplices with at least one facet of volume one.
\end{proof}

\begin{proof}[Proof of Theorem~\ref{thm:d3bound}]
By~\cite[Corollary 2.2]{BerFer}, we may assume without loss of generality that $\beta\in \cE_A$ is not simple. 
From Okuyama's formula in Theorem~\ref{thm:d=3}, in the case $d=3$, 
\begin{equation}
\label{eq:Okuyama-formula-bound}
\rank(M_A(\beta))-\vol(A)
\leq 
\left(\sum_{F}\vol_{\ZZ^3 \cap \CC F}(F)\right)-1,
\end{equation} 
where $F$ runs over all one dimensional faces of $A$.

By way of contradiction, suppose that there is some $A \in \ZZ^{3\times n}$ and $\beta \in \CC^3$ such that 
\[
\rank (M_A(\beta))\geq 2 \cdot\vol (A);
\]
in particular, the rank jump of $M_A(\beta)$ at $\beta$ would be at least $\vol(A)$. Then by~\eqref{eq:Okuyama-formula-bound},
\begin{align}
\label{eq:sum-rays}
\vol(A) \leq 
\left(\sum_{F}\vol_{\ZZ^3 \cap \CC F}(F)\right)-1,
\end{align} 
where the summation runs over all edges $F$ in $\Delta$ that contain the origin. 
Combining this 
with~\eqref{eq:inequalities_bc} yields
\begin{equation}
\label{eq:inequalities_abc}
\vol(A) +2 
\leq 
\left(\sum_{F}\vol_{\ZZ^3 \cap \CC F}(F)\right)+1 
\leq 
|\Delta \cap\ZZ^3| 
= 
h_1^* +4 
\leq 
\vol(A) + 3,
\end{equation}
where again the summation runs over all edges $F$ in $\Delta$ that contain the origin.
Comparing the outer terms of~\eqref{eq:inequalities_abc}, it follows that exactly two of the three inequalities present must be equalities. 
We distinguish two cases, based on the third inequality. 

First, consider the case in which the third inequality in~\eqref{eq:inequalities_abc} is an equality, 
so that $h_1^*+1=\vol (A)$. 
This implies by Theorem~\ref{thm:Ehrhart-series} that $h_2^*=h_3^*=0$. 
Thus by~\cite[Theorem 2.5]{Batyrev-Nill}, $\Delta$ is either an (iterated) pyramid over the exceptional triangle or a Lawrence polytope. 
These cases are handled in Lemmas~\ref{lem:exceptional-triangle}~and~\ref{lem:lawrence}, showing that the inequality~\eqref{eq:main-d3bound} is strict in this case. 

Finally, we are left to consider the case that the third inequality in~\eqref{eq:inequalities_abc} is strict, so that the first two inequalities are both equalities. 
Now~\eqref{eq:inequalities_abc} becomes 
\[
\vol(\Delta) +2 
= 
\left(\sum_{F}\vol_{\ZZ^3 \cap \CC F}(F)\right)+1 
= 
|\Delta \cap\ZZ^3| 
= 
h_1^* + 4 
<
\vol(\Delta) + 3.
\]
Since the summation is over all edges $F$ of $\Delta$ with the origin as a vertex, the second equality implies that all lattice points in $\Delta$ lie on an edge of $\Delta$ that has the origin as a vertex and $\Delta$ has no interior lattice points. 
Thus every edge of $\Delta$ that does not contain the origin has volume $1$. 
We will show that no $\Delta$ fitting this case admits a rank-jump higher than one. 
To begin, note that $h_1^* + 2=\vol(\Delta)$, so $h_2^* + h_3^*=1$ since $h_1^*+h_2^*+h_3^*+1=\vol(\Delta)$. 

If $h_2^*=0$ and $h_3^*=1$, then 
$\Delta$ must have an interior lattice point by property (5) in Theorem~\ref{thm:Ehrhart-series},
a contradiction. 
Thus, we must be in the case that $h_2^*=1$ and $h_3^*=0$. 
Given that $d=3$, it follows that $\deg(\Delta)=2$. 
By \cite[Theorem~2]{treutlein}, 
$\Delta$ satisfies one of two possible cases.  

First, 
$\Delta$ could be isomorphic to 
the convex hull of the origin and the columns of 
$\conv\left[\begin{smallmatrix}
	3 & 0 & 0\\
	0 & 3 & 0\\
	0 & 0 & 1
\end{smallmatrix}\right]$ in $\RR^3$, 
so that $\vol(\Delta)=9$ and $|\Delta\cap\ZZ^3|=11$. 
In this case, $\Delta$ is a pyramid over a face of dimension two, so by~\cite{CDD,reducibility}, 
\[
\rank(M_A(\beta))\leq \vol (A) +1<2\cdot\vol(A). 
\]

Second, since $\Delta$ is not isomorphic to the convex hull of the origin and the columns of $\conv\left[\begin{smallmatrix}
	3 & 0 & 0\\
	0 & 3 & 0\\
	0 & 0 & 1
\end{smallmatrix}\right]$ in $\RR^3$, then~\cite[Theorem~2]{treutlein} implies that the following equivalent statements hold: 
\begin{enumerate}
\item $\vol(\Delta)\leq 4\cdot(|(2\Delta)^\circ\cap\ZZ^3|+1)$,
\item $|\Delta\cap\ZZ^3|\leq 3\cdot|(2\Delta)^\circ\cap\ZZ^3|+7$, and 
\item $|\Delta\cap\ZZ^3|\leq \frac{3}{4}\cdot\vol(\Delta)+4$. 
\end{enumerate}
Also, by~\cite[Lemma~9]{treutlein},  
$\vol(\Delta)=|\Delta\cap\ZZ^3|+|(2\Delta)^\circ\cap\ZZ^3|-3$. 
Combining this with 
$|\Delta\cap\ZZ^3|=\vol(\Delta)+2$ from~\eqref{eq:inequalities_abc} implies that $|(2\Delta)^\circ\cap\ZZ^3|=1$. 
Thus 
\[
|\Delta\cap\ZZ^3|\leq 3|(2\Delta)^\circ\cap\ZZ^3|+7=10
\quad\text{and}\quad 
\vol(\Delta)=|\Delta\cap\ZZ^3|-2\leq 8.
\]
Now the Ehrhart polynomial of $\Delta$,
\[
g_\Delta(t) = g_3t^3+g_2t^2+g_1t+1,
\]
satisfies the following constraints. 
First, $|\Delta\cap\ZZ^3|=\vol(\Delta)-2$, so 
\[
g_3+2 = \vol(\Delta)+2 = b = g_\Delta(1) = g_3+g_2+g_1+1,
\]
which implies that $g_1=1-g_2$. 
Next, since $\Delta$ has no interior lattice points, by Ehrhart reciprocity, which states that $g_{\Delta^\circ}(t) = (-1)^dg_\Delta(-t)$, 
\[
0 = |\Delta^\circ\cap\ZZ^3| 
  = -g_\Delta(-1) 
  = g_3-g_2+g_1-1.
\] 
Finally, since $i=|(2\Delta)^\circ\cap\ZZ^3|=1$, 
$1 = -g_{\Delta}(-2) 
  = 8g_3-4g_2+2g_1-1$.  
However, the equations 
\[
g_1=1-g_2,
\quad 
0 = g_3-g_2+g_1-1, 
\quad
\text{and} 
\quad 
1 = 8g_3-4g_2+2g_1-1 
\]
are incompatible, so there is no $\Delta$ that fits this case. 
Having exhausted all possibilities, we conclude that if the third inequality in~\eqref{eq:inequalities_abc} is strict, then $\rank(M_A(\beta))<2\cdot\vol(A)$.

Finally, the sequence of examples constructed in~\cite[Section 3]{BerFer} proves that the upper 
bound in~\eqref{eq:main-d3bound} is sharp.
\end{proof}

\raggedbottom
\def\cprime{$'$} \def\cprime{$'$}
\providecommand{\MR}{\relax\ifhmode\unskip\space\fi MR }
\providecommand{\MRhref}[2]{%
  \href{http://www.ams.org/mathscinet-getitem?mr=#1}{#2}
}
\providecommand{\href}[2]{#2}

\end{document}